\newtheorem{thm}{Theorem}[section]
\newtheorem{prop}[thm]{Proposition}
\newtheorem{lem}[thm]{Lemma}
\newtheorem{assertion}[thm]{Assertion}
\theoremstyle{definition}
\theoremstyle{remark}
\newtheorem{rem}[thm]{Remark}
\newcommand{\K}{{\mathbb K}}
\newcommand{\Q}{{\mathbb Q}}
\newcommand{\calS}{{\mathcal S}}
\newcommand{\B}{{\mathcal B}}
\newcommand{\M}{{\mathcal M}}
\newcommand{\e}{\varepsilon}
\newcommand{\m}{\mathsf{m}}
\newcommand{\mapright}[1]{%
 \smash{\mathop{%
  \hbox to 1cm{\rightarrowfill}}\limits_{#1}}}
\newcommand{\maprightd}[2]{%
 \smash{\mathop{%
  \hbox to 1.2cm{\rightarrowfill}}\limits^{#1}\limits_{#2}}}
\newcommand{\mapleft}[1]{%
 \smash{\mathop{%
  \hbox to 1cm{\leftarrowfill}}\limits_{#1}}}
\newcommand{\mapleftu}[1]{%
 \smash{\mathop{%
  \hbox to 0.8cm{\leftarrowfill}}\limits^{#1}}}
\newcommand{\maprightu}[1]{%
 \smash{\mathop{%
  \hbox to 1cm{\rightarrowfill}}\limits^{#1}}}
\newcommand{\maprightud}[2]{%
 \smash{\mathop{%
  \hbox to 1cm{\rightarrowfill}}\limits^{#1}_{#2}}}
\newcommand{\mapleftud}[2]{%
 \smash{\mathop{%
  \hbox to 1cm{\leftarrowfill}}\limits^{#1}_{#2}}}
\newcounter{eqn}[section]
\def\theeqn{\textnormal{(\thesection.\arabic{eqn})}}
\def\eqnlabel#1{%
  \refstepcounter{eqn}%
  \label{#1}%
  \leqno{\theeqn}}
\begin{document}

\title[On the whistle cobordism operation]{On the whistle cobordism operation in string topology of classifying spaces 
}

\footnote[0]{{\it 2010 Mathematics Subject Classification}: 
55P50, 81T40, 55R35
\\ 
{\it Key words and phrases.} String topology, classifying space, topological quantum field theory, Eilenberg-Moore spectral sequence.  

Department of Mathematical Sciences, 
Faculty of Science,  
Shinshu University,   
Matsumoto, Nagano 390-8621, Japan   
e-mail:{\tt kuri@math.shinshu-u.ac.jp}
}

\author{Katsuhiko KURIBAYASHI}
\date{}
   
\maketitle

\begin{abstract} In this manuscript, we consider cobordism operations in the $2$-dimensional labeled open-closed topological quantum field theory for the classifying space of 
a connected compact Lie group in the sense of Guldberg. In particular, it is proved that the whistle cobordism operation is non-trivial in general provided the labels are in the set of maximal closed subgroups of the given Lie group.  
The non-triviality of cobordism operations induced by gluing the whistle with the opposite and other labeled cobordisms is also discussed. 
\end{abstract}

\section{Introduction} 
String topology initiated by Chas and Sullivan \cite{C-S} 
until now provides many impressive and fruitful algebraic structures for the homology of the free loop spaces of orientable manifolds 
\cite{C-J, C-G, Godin}, orbifolds \cite{L-U-X}, the classifying spaces of Lie groups \cite{C-M, H-L, K-M},  
Gorenstein spaces \cite{F-T, K-L-N} and differentiable stacks \cite{B-G-N-X}. 
In this manuscript, we deal with string topology for classifying spaces,   
which is enriched with a $2$-dimensional  labeled open-closed {\it topological quantum field theory} (TQFT). 

In \cite{G},  Guldberg has developed such a labeled TQFT for the classifying space $BG$ of a connected compact Lie group $G$. 
In consequence, the homology groups of double coset spaces associated to $G$ and of the free loop space $LBG$ 
are simultaneously incorporated into the open-closed TQFT with 
labels in a set of closed connected subgroups of $G$. The structure is indeed induced by an open-closed {\it homological conformal field theory} (HCFT) which is 
an extended version of a closed HCFT for classifying spaces due to Chataur and Menichi \cite{C-M}; see \cite[Theorem 1.2.3 and Lemma 2.4.1]{G}. 
The aim of this manuscript is to investigate the non-triviality of an important cobordism operation which connects the open and closed theories in the labeld TQFT. 


To describe a labeled open-closed TQFT in general, we need to introduce the category 
$\mathsf{oc\text{-}Cobor}(\calS)$ of open-closed strings. Its  objects are finite disjoint unions of oriented circle and intervals with ends labeled by elements of a fixed set $\calS$. A morphism in the category 
is the diffeomorphism class of cobordisms from $Y_0$ to $Y_1$ labeled in $\calS$, where such a cobordism is indeed a 
$2$-dimensional oriented manifold $\Sigma$ whose boundary consists of three parts 
$
Y_0\cup Y_1\cup \partial_{\text{free}}\Sigma.
$
Here the part $\partial_{\text{free}}\Sigma$ called the {\it free boundary} is a cobordism 
between $\partial Y_0$ and $\partial Y_1$. Moreover, it is required that the connected component of  
$\partial_{\text{free}}\Sigma$ is labeled by elements of $\calS$ compatible with the labeling of 
$\partial Y_0$ and $\partial Y_1$; see \cite[Section 2]{M-S} for more details. 
The compositions are given by gluing cobordisms provided the labelings of the boundaries are compatible.  
By definition,  a {\it $2$-dimensional labeled open-closed  topological quantum field theory} is a monoidal functor $\mu$ from $(\mathsf{oc\text{-}Cobor}(\calS), \coprod)$ to $(\mathbb{K}\text{-}\mathsf{Vect}, \otimes)$ 
the category of graded vector space over $\K$, where the monoidal structure  $\coprod$ of $\mathsf{oc\text{-}Cobor}(\calS)$ is given by the disjoint union of cobordisms. 
We may write $\mu_\Sigma$ for the linear map assigned by a cobordism $\Sigma$.  Moreover, we denote by $(\Sigma, \{\Sigma^H\}_{H\in \calS'})$ 
a one or two dimensional labeled cobordism whose free boundary has conected components $\{\Sigma^H\}_{H\in \calS'}$, where $\calS'$ is a subset of 
$\calS$.

While the labeled open-closed TQFT for a classifying space is investigated in this article, we refer the reader to the results due to Blumberg, Cohen and Teleman \cite{BCT}
for an open-closed TQFT labeled by submanifolds of a given manifold; see also \cite{Godin} for an HCFT based on the free loop space of a manifold. 

Let $G$ be a compact connected Lie group and $\B$ a set consisting of connected closed subgroups of $G$. 
Let $W=(W, \{W^H\})$ denote the whistle cobrodism from the interval $I$ to the circle $S^1$ whose 
incoming boundary $I$ is connected with an arc $W^H$ labeled by a subgroup $H \in \B$ at the each endpoint; 
see the figure below for the whistle cobordism.  
\[
\hspace{1.5cm}
\begin{xy} 
(4, -6) *{I \!= \!\partial_{\text{in}}}, 
(77, 0) *{\partial_{\text{out}}\!=\! S^1}, 
(8, 9) *{W^H},
(0,0) *{\bullet}="1",
(20,-8)*{\bullet}="2", 
(23, 0) *{}="7",
(18, 7) *{}="3", 
(52.5, 0)*{}="4",
(53.5, 1)*{}="4.5",
(65,-8)*{}="5", 
(60, 7) *{}="6", 
\ar @{-}"1";"2", 
\ar @{-}"1";"7", 
\ar @{..}"7";"4", 
\ar @{-}"2";"5", 
\ar @{-}"3";"6",
\ar @/^3mm/ @{~}"1";"3",
\ar @/^4mm/ @{~}"3";"2",
\ar @/^3mm/ @{-}"1";"3",
\ar @/^4mm/ @{-}"3";"2",
\ar @/^5.5mm/ @{-}"6";"5",
\ar @/_3.5mm/ @{..}"4.5";"5",
\ar @/_1.5mm/ @{..}"6";"4.5",
\end{xy}
\]

\noindent
Observe that the arc $W^H$, which is denoted by the wave curve, is the only free boundary of the whistle.  In \cite{G}, the non-triviality of a cobordism operation of open strings, namely intervals, is revealed; see Appendix A for more computations in an open TQFT. 
Since the whistle cobrodism connects open and closed strings, it is anticipated that the operation associated with the whistle plays a key role in the open-closed TQFT.  
In fact, an open-closed TQFT splits into the open theory and the closed one if all whistle cobordism operations are trivial; see \cite[Propositions 3.8 and 3.9]{L-P} for generators of morphisms in $\mathsf{oc\text{-}Cobor}(\B)$ for example. 
In this article, we focus on the whistle cobordism operation and the non-triviality is discussed. 

In what follows, the homology and cohomology are with 
coefficients in a field $\K$. Our main theorem is described as follows. 

\begin{thm}\label{thm:main}
Let $G$ be a connected compact Lie group and $H$ a connected closed subgroup of maximal rank. 
Suppose that the integral homology groups of $G$ and $H$ are $p$-torsion free, where $p$ is the characteristic of $\K$.  Then the operations 
$\mu_W$ and $\mu_{W^{\text{\em op}}}$ associated to the whistle cobordisms $(W, \{W^H\})$ and $(W^{\text{\em op}}, \{{(W^{\text{\em op}}})^H\})$ 
are non-trivial. 
Moreover, the composite operation $\mu_{W} \circ \mu_{W^{\text{\em op}}}=\mu_{W\circ W^{\text{\em op}}}$ 
is also non-trivial if 
$(\deg (B\iota)^*(x_i), p)=1$ for any $i = 1, ..., l$, where $B\iota : BH \to BG$ stands for the map between classifying spaces induced by the inclusion 
$\iota : H \to G$ and 
$x_1, ..., x_l$ are generators of $H^*(BG; \K)$.  
\end{thm}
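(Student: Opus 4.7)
The plan is to recognize $\mu_W$ (and $\mu_{W^{\mathrm{op}}}$) as a composite of natural maps among the spaces of fields on $W$ and its boundary, and then verify non-triviality by explicit computation through the Eilenberg--Moore spectral sequence (EMSS). The key geometric observation is that the space of fields on the whistle---maps $W \to BG$ with the free-boundary arc constrained to $BH$---is homotopy equivalent to the homotopy pullback $LBG \times_{BG} BH$, where $LBG \to BG$ is evaluation at the point where $W^H$ attaches and $BH \to BG$ is $B\iota$. Restriction to the incoming interval then produces a map $r_{\mathrm{in}}\colon LBG \times_{BG} BH \to BH \times_{BG} BH$ whose homotopy fiber is $H$, while restriction to the outgoing circle is the projection $r_{\mathrm{out}}\colon LBG \times_{BG} BH \to LBG$. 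Following Guldberg, one writes $\mu_W = (r_{\mathrm{out}})_{*} \circ (r_{\mathrm{in}})^{!}$, with a symmetric description for $\mu_{W^{\mathrm{op}}}$.

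Under the $p$-torsion-free and maximal-rank hypothesis, $H^{*}(BG;\K)$ is polynomial on $x_1, \ldots, x_l$, and $H^{*}(BH;\K)$ is a free $H^{*}(BG;\K)$-module of finite rank $|W_G|/|W_H|$. Each EMSS for the relevant pullbacks collapses, furnishing
\[
H^{*}(LBG) \cong H^{*}(BG) \otimes \Lambda(\bar x_1, \ldots, \bar x_l), \qquad H^{*}(BH \times_{BG} BH) \cong H^{*}(BH) \otimes_{H^{*}(BG)} H^{*}(BH),
\]
together with $H^{*}(LBG \times_{BG} BH) \cong H^{*}(LBG) \otimes_{H^{*}(BG)} H^{*}(BH)$. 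Through these identifications $r_{\mathrm{out}}^{*}$ is a natural inclusion, while $r_{\mathrm{in}}^{!}$ corresponds dually to multiplication by the Thom/Euler class for the $H$-fibration. Non-triviality of $\mu_W$ and $\mu_{W^{\mathrm{op}}}$ then follows by tracking a distinguished class---for instance the unit $1 \otimes 1$ in the tensor-product model---through these identifications and verifying the image is nonzero; the maximal-rank hypothesis guarantees that the characteristic class involved is itself non-trivial.

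For the composition $\mu_W \circ \mu_{W^{\mathrm{op}}} = \mu_{W \circ W^{\mathrm{op}}}$, the relevant cobordism is a pair of pants with one free boundary circle labeled $H$, whose space of fields is $\simeq LBG \times_{BG} LBH$. Writing the resulting operation as a pushforward of a shriek and chasing a suitable generator (such as $1$ or a product $\bar x_{i_1} \cdots \bar x_{i_k}$) through the collapsed EMSS models, one finds that the composite reduces to multiplication by a characteristic expression in the classes $B\iota^{*}(x_i)$ whose overall coefficient involves factors of $\deg B\iota^{*}(x_i)$, arising from the interaction of the BV/loop operator on $H^{*}(LBG)$ with the Umkehr identification. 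The coprimality condition $(\deg B\iota^{*}(x_i), p) = 1$ then ensures these factors are units in $\K$, yielding the desired non-triviality. The main obstacle is precisely this last step: one must carefully track the pair-of-pants Umkehr through the EMSS, pin down the exact characteristic-class coefficient that appears, and verify that its non-vanishing modulo $p$ follows from exactly the stated degree condition.
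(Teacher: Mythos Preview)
Your overall framework---identifying $\M(W)$ with the homotopy pullback $LBG \times_{BG} BH$, computing the relevant cohomologies via collapsed Eilenberg--Moore spectral sequences, and expressing the operation as a composite of a restriction and an integration along the fibre---matches the paper's approach. The identifications you write down for $H^*(LBG)$, $H^*(\M(W))$, and $H^*(\M(\partial_{\text{in}}))$ are exactly those obtained in the paper's diagrams~(3.4)--(3.6).

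However, there are two genuine gaps.

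First, for the non-triviality of $\mu_W$, saying that $r_{\text{in}}^{!}$ ``corresponds dually to multiplication by the Thom/Euler class'' glosses over the essential computation. The fibration $h\colon \M(W)\to \M(\partial_{\text{in}})$ has fibre $H$, and to compute $h^!$ one must analyse the Leray--Serre spectral sequence. The paper shows that each exterior generator $z_j\in H^*(H)$ is transgressive to $u_j\otimes 1 - 1\otimes u_j$ (Lemma~3.1), writes $(B\iota)^*x_i\otimes 1 - 1\otimes (B\iota)^*x_i = \sum_j \zeta_{ij}(u_j\otimes 1 - 1\otimes u_j)$ with $\m(\zeta_{ij}) = \partial(B\iota)^*x_i/\partial u_j$ via \cite[Lemma~3.4]{S_3}, and then proves that the permanent cycles $w_i=\sum_j \zeta_{ij}z_j$ satisfy $y_1\cdots y_l = \det(\zeta_{ij})\,z_1\cdots z_l$ in $E_\infty$. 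This is what yields $D\mu_W(y_1\cdots y_l)=\det(\zeta_{ij})\neq 0$. None of this is captured by an Euler-class description.

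Second, and more seriously, your proposed mechanism for the degree condition in the composite is incorrect. You suggest the coefficients $\deg(B\iota)^*x_i$ arise from ``the interaction of the BV/loop operator on $H^*(LBG)$ with the Umkehr identification.'' The BV operator plays no role in the paper's proof of this part. What actually happens is: from the computation above, $D\mu_W(y_1\cdots y_l)=\det(\zeta_{ij})$; applying $D\mu_{W^{\text{op}}}=k^!\circ h^*$ sends this to $k^!\bigl(\det(\partial(B\iota)^*x_i/\partial u_j)\bigr)$; and the decisive input is a theorem of Smith \cite[Proposition~3]{S} asserting that, under exactly the hypothesis $(\deg(B\iota)^*x_i,\,p)=1$, the Jacobian determinant $\det(\partial(B\iota)^*x_i/\partial u_j)$ represents the fundamental class of $G/H$ in $H^*(BH)$. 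Hence $k^!$ sends it to $1$. The degree condition is thus a hypothesis of Smith's result, not a unit-coefficient issue arising from a loop operator. Without invoking this Jacobian-as-fundamental-class fact, you have no way to conclude that the composite is non-trivial, as you yourself acknowledge in your final sentence.
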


This enables us to conclude that the open theory and closed theory are inseparable in general. In consequence, we 
can explicitly determine every labeled open-closed cobordism operation over the rational $\mathbb{Q}$ 
under an appropriate assumption on the set of  labels; see Assertion \ref{assertion:TQFT}.  

We observe that the cobordism $W\circ W^{\text{op}}$ in Theorem \ref{thm:main} is the cylinder with a hole labeled by $H$ 
a subgroup of maximal rank. Operations associated with other composites of the whistle cobordisms are discussed in Remark \ref{rem:OtherOperations} below.  

The rest of the article is organized as follows. In Section 2, we briefly review the construction of 
a cobordism operation in the labeled TQFT for classifying spaces due to Guldberg. After describing our strategy for proving the main theorem, we consider the Eilebnerg-Moore spectral sequences for appropriate pullback diagrams and give the proof in Section 3.  Appendix A deals with a labeled open TQFT for classifying spaces. 
In Appendix B, we discuss rational homotopy theoretical methods for computing the whistle cobordism operation.

\section{A brief review of the labeled TQFT for classifying spaces}
We recall the cobordism operation introduced in \cite[Section 2.3]{G}. In what follows, we denote by $\text{map}(X, Y)$ 
the mapping space of maps from $X$ to $Y$ with compact-open topology. Observe that $\text{map}(S^1, BG) = LBG$ by definition. 
For a two dimensional labeled cobordism $\Sigma :=(\Sigma, \{\Sigma^H\}_{H\in \B})$ with in-coming boundary $\partial_{\text{in}}$ and outgoing boundary $\partial_{\text{out}}$, we define a space $\M(\Sigma)$ by the pullback diagram
\[
\xymatrix@C35pt@R18pt{
\M(\Sigma) \ar[r] \ar[d] \ar[r] & \text{map}(\Sigma, BG) \ar[d]^{i^*}\\
\prod_H \text{map}(\Sigma^H, BH) \ar[r]_{B\iota_*} & \prod_H \text{map}(\Sigma^H, BG),
}
\] 
where $\iota : H \to G$ is the inclusion and $i : \coprod_H \Sigma^H=\partial_{\text{free}}\Sigma \to \Sigma$ denotes the embedding. By applying the same pullback construction as above to a one dimensional cobordism of the form 
$ \partial_{\text{\em in}} = (\partial_{\text{\em in}},  \{\Sigma^H\cap \partial_{\text{\em in}}\}_{H\in \B})$, 
we define a space $\M(\partial_{\text{\em in}})$. The naturality of the construction enables us to obtain a map 
$in^* : \M(\Sigma) \to \M(\partial_{\text{\em in}})$ from the inclusion $in : \partial_{\text{\em in}} \to \Sigma$ of the 
in-coming boundary.

\begin{prop}\cite[Proposition 2.3.9]{G} \label{prop:fibrations} {\em (i)} The map $in^*$ induced the inclusion gives rise to  a fibration $\M(\Sigma)_c\to \M(\Sigma) \stackrel{in^*}{\to} \M(\partial_{\text{\em in}})$ 
whose fibre $\M(\Sigma)_c$ is the product of $\Omega BH \simeq H$, $G/H$ and 
the total space $E$ of a fibration of the form $\Omega BH \to E \to G/H$ in which $H$'s are the labels of the cobordism $\Sigma$. \\
{\em (ii)} The fibration in {\em (i)} is orientable; that is, the action of the fundamental group of the base on the homology of the fibre is trivial. 
\end{prop}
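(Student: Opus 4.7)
The strategy is first to establish $in^*$ as a fibration, then identify its fibre by unwinding the pullback structure; orientability will then follow from the simple connectivity of $BG$ and $BH$. To verify that $in^*$ is a fibration, one replaces $B\iota : BH \to BG$ by a Serre fibration (for instance the Borel construction $EG/H \to EG/G = BG$, whose fibre is $G/H$). Then $B\iota_* : \text{map}(\Sigma^H, BH) \to \text{map}(\Sigma^H, BG)$ is a Serre fibration, so that the pullback square defining $\M(\Sigma)$ becomes a homotopy pullback, and likewise for $\M(\partial_{\text{in}})$. Since $\partial_{\text{in}} \hookrightarrow \Sigma$ is a cofibration of CW pairs, the restriction maps $\text{map}(\Sigma, -) \to \text{map}(\partial_{\text{in}}, -)$ are fibrations. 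Naturality of pullbacks and the universal property of fibred products then yield that $in^* : \M(\Sigma) \to \M(\partial_{\text{in}})$ is a fibration.

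The fibre $\M(\Sigma)_c$ is itself the pullback of the fibres of the three constituent restrictions. A handle decomposition of the 2-dimensional cobordism $\Sigma$ presents $\Sigma/\partial_{\text{in}}$ (and each $\Sigma^H/(\Sigma^H \cap \partial_{\text{in}})$) as a wedge of circles and arcs, whose mapping spaces into $BG$ and $BH$ reduce to finite products of loop spaces $\Omega BG \simeq G$ and $\Omega BH \simeq H$. Reading off the contributions for each label $H$, one isolates a $G/H$-factor coming from the homotopy fibre of $B\iota$ when the $BG$-map is lifted through $BH$ along the free boundary $\Sigma^H$; an $\Omega BH \simeq H$-factor coming from the $BH$-extension along the arc $\Sigma^H$; and an $E$-factor, which encodes the compatibility between the $BG$-extension on $\Sigma$ and the $BH$-lift on $\Sigma^H$ and fits into a fibration $\Omega BH \to E \to G/H$.

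For orientability, observe that $G$ and $H$ are connected, so $BG$ and $BH$ are simply connected. Consequently, the monodromy action of $\pi_1(\M(\partial_{\text{in}}))$ on $H_*(\M(\Sigma)_c)$ factors through an action on the homology of mapping spaces into the simply connected targets $BG$ and $BH$, which must therefore be trivial. The principal obstacle is the explicit identification of the fibre in (i): the various pullback factors have to be disentangled carefully to exhibit the product structure $H \times G/H \times E$, and this is where the particular topology of two-dimensional cobordisms, which admit handle decompositions with only $0$- and $1$-handles, is decisive.
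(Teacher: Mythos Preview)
The paper does not prove this proposition at all; it is quoted as \cite[Proposition 2.3.9]{G} and used as input. The only related material in the paper is Remark~\ref{rem:fibre}, which unwinds the pullback in the special case of the whistle $W$ and identifies the fibres of $h$ and $k$ as $\Omega BH\simeq H$ and $G/H$ by hand, referring back to the proof in \cite{G} for the general statement. So there is no ``paper's own proof'' to compare against.

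Your sketch is a reasonable outline of what the cited proof presumably does, but two points deserve tightening. First, the orientability argument as you phrase it is not quite right: $\M(\partial_{\text{in}})$ is generally not simply connected, so one cannot conclude triviality of the monodromy merely from the simple connectivity of $BG$ and $BH$ as targets. The correct mechanism, visible in Remark~\ref{rem:fibre} and diagram~(3.2) for the whistle, is that $in^*$ is a \emph{pullback} of a fibration whose base is a product of copies of $BH$ (hence simply connected); orientability of that fibration is automatic and is inherited by any pullback. Second, your identification of the fibre is only gestured at. The actual content of (i) is a case analysis on how each connected component of $\partial_{\text{free}}\Sigma$ meets $\partial_{\text{in}}$: an arc with both endpoints on $\partial_{\text{in}}$ contributes $\Omega BH$, a component disjoint from $\partial_{\text{in}}$ contributes $G/H$, and the mixed case gives the twisted factor $E$. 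Your handle-decomposition remark points in this direction but does not carry it out; absent that case analysis the claimed product decomposition $H\times G/H\times E$ is not established.
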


\noindent
Thus for the fibration $h :={in^*} : \M(\Sigma) \to \M(\partial\Sigma)$, we can define the integration along the fibre $h_! : H_*(\M(\partial\Sigma)) \to H_{*+i}(\Sigma)$ 
with degree $i$ the top degree of $H_*(\M(\Sigma)_c)$.  The main result \cite[Theorem 1.2.3]{G} asserts that 
an operation $\mu_\Sigma$ defined by the composite 
\[
\mu_\Sigma :  H_*(\M(\partial_{\text{in}})) \stackrel{h_!}{\longrightarrow}  H_*(\M(\Sigma)) \stackrel{(out^*)_*}{\longrightarrow} 
 H_*(\M(\partial_{\text{out}}))
\]
for each labeled cobordism $\Sigma$ gives rise to a labeled open-closed TQFT structure for the classifying space of $G$.  
We here and henceforth omit the action of {\it determinants} in the TQFT for classifying spaces; see \cite{C-M, G} for the action. This means that our computation of the cobordism operations below is made up to multiplication by non-zero scalar.

For a labeled whistle cobordism $W =(W, W^H)$, let $a$ and $b$ be the two endpoints of the arc $W^H$ and hence they are also endpoints of the in-boundary 
$\partial_{\text{in}}$ of $W$. 
In what follows, we may write ${}_a\cap_b$ and ${}_a\text{---}_b$ for the arc $W^H$ and the in-boundary $\partial_{\text{in}}$, respectively.

\begin{rem}\label{rem:fibre}
As for the whistle $W =(W, W^H)$, the fibration $h=in^* : \M(W) \to \M(\partial_{\text{in}})$ induced by the embedding $\partial_{\text{in}} \to W$ 
is the homotopy pullback of 
$$in^* : \M(W^H)=\text{map}(W^H, BG) \to 
\M(\{a, b\})=\text{map}(\{a, b\} ,BH)$$ 
along the map $(in')^* : \M(\partial_{\text{in}}) \to \M(\{a, b\})$. 
Thus the map $in_*$ is regarded as the evaluation map $BH^I \to BH\times BH$ at $0$ and $1$. We see that the fibre of $h$ has the homotopy type of $\Omega BH$ and 
then of $H$. Moreover, the fibration $k={out}_* : \M(W) \to \M(\partial_{\text{out}})$ is the homotopy pullback of the fibration $B\iota : BH \to BG$ along the evaluation map 
$ev_0 : \M(\partial_{\text{out}})=LBG \to BG$. This implies that the fibre of $k$ has the homotopy type of the homogeneous space $G/H$.  
These results follow from the proof of \cite[Proposition 2.3.9]{G}. We observe that $\deg \mu_W = \text{dim} \ H$ and 
$\deg \mu_{W^{\text{op}}}=  (\text{dim} \ G/H)=  (\text{dim} \ G -\text{dim} \ H)$. 

If labels are in the set of subgroups of maximal rank, then Grassmann manifolds and flag manifolds may appear as  
the fibres of the fibrations $k : \M(W) \to \M(\partial_{\text{out}})$. 
\end{rem}

\section{Proof of the main result}

We begin by describing our strategy for proving Theorem \ref{thm:main}. 

\begin{enumerate}
\setlength{\parskip}{0cm} 
  \setlength{\itemsep}{-0.4cm}
\item We deal with the dual operation $D\mu_W= h^! \circ k^*$ on the cohomology. \\
\item Determine explicitly the cohomology algebras of $\M(\partial W)$ and $\M(W)$ with the Eilenberg-Moore spectral sequences, and investigate the behavior of the maps 
$h^*$ and $k^*$ for generators of the cohomology comparing the spectral sequences. \\
\item Consider the Leary-Serre spectral sequence for the fibration in Proposition \ref{prop:fibrations} in order to compute the integration $h^!$ along the fibre. \\
\item  Determine the value of the composite $h^! \circ k^*$ at an appropriate element of $H^*(\M(\partial_{\text{out}}))$. \\
\item As for the latter half of the assertions, we also consider the dual operation $D\mu_{W^{\text{op}}}= k^! \circ h^*$ with the same strategy as above. \\
\item Reveal the nontriviality of the composite $D\mu_{W^{\text{op}}}\circ D\mu_W = D(\mu_{W\circ W^{\text{op}}})$ with the description of the fundamental class of the homogeneous space $G/H$ due to Smith \cite{S}. 
\end{enumerate}

We recall the Eilenberg-Moore spectral sequence (EMSS) in a general setting. 
Let $p : E \to B$ be a fibration over a simply-connected base $B$ and $\pi : E_\varphi \to X$ 
the pullback along a map $\varphi : X \to B$. We then have the Eilenberg-Moore spectral sequence \cite{S_2} converging 
to the cohomology $H^*(E_\varphi)$ as an algebra with 
$
E_2^{*,*} \cong \text{Tot}^{*,*}_{H^*(B)}(H^*(E), H^*(X))
$
as a bigraded algebra. Observe that each term of the spectral sequence appears in the second quadrant. 

In order to compute the cohomology concerning the whistle cobordism operation by using the EMSS,  
we consider commutative diagrams 
$$
\xymatrix@C10pt@R8pt{
  & \M(\partial_{\text{out}}) \ar[rr]^{i}  \ar@{->}'[d]^{}[dd]  
  & & \text{map}(S^1, BG) \ar[dd]^{} & \\
\M(W) \ar[rr]_(0.6){j} \ar[ru]^(0.4){k:=(out)^*} \ar[dd] & & \text{map}(W, BG) \ar[ru]_{\simeq}^{res} \ar[dd]^(0.3){res} & \\
   & \text{map}(\varnothing, BG) \ar@{=}'[r][rr] & &   \text{map}(\varnothing, BG) & \\
\text{map}({}_a\cap_b, BH) \ar[rr]_{(B\iota)_*} \ar[ur]^{} & & \text{map}({}_a\cap_b, BG),  \ar[ru] &
}
\eqnlabel{add-0}
$$
$$
\xymatrix@C10pt@R8pt{
 &  \M(W) \ar[ld]_-{h} \ar@{->}'[d][dd]  \ar[rr]^{j} & &  \text{map}(W, BG) \ar[ld]^{res} \ar[dd]^(0.6){res} \\
 \M(\partial_{\text{in}})  \ar[rr]  \ar[dd]_\beta & & \text{map}({}_a\text{---}_b, BG) \ar[dd]^(0.35){p}  \\
   & \text{map}({}_a\cap_b, BH)\ar@{->}'[r]^(0.7){(B\iota)_*}[rr] \ar[ld]_{res}& & \text{map}({}_a\cap_b, BG) \ar[ld]_{res} \\
\text{map}(\{a, b\}, BH)\ \ar[rr]_{(B\iota)_*}  & & \text{map}(\{a, b\}, BG) 
}
\eqnlabel{add-1}
$$
in which the front and back squares are pullback diagrams, and $res$ and $p$ denote the maps induced by the embeddings. 
Moreover, using a deformation retraction $r : W \to  \partial_{\text{out}}$ with $r(a)=0=r(b)$ and 
$r({}_a\text{---}_b) =  \partial_{\text{out}}$, which is a homotopy inverse of the embedding $out : \partial_{out} \to W$, we have commutative diagrams 
$$
\xymatrix@C5pt@R8pt{
                                    &  \text{map}(W, BG) \ar[ld]_{res} \ar[dd]^(0.35){res} &&  \text{map}(S^1, BG) \ar[dd]^{ev_0}\ar[ll]_{r^*}^{\simeq} \ar[rr]^-u && BG^I \ar[dd]^-{\e_0\times \e_1 }\\
 \text{map}({}_a\text{---}_b, BG) \ar[dd]_(0.5){res} \ar@/_0.8pc/[rrrrru]_(0.5){=} & \\
    & \text{map}({}_a\cap_b, BG) \ar[ld]_{res} 
                         && BG  \ar[ll]^-{\simeq }_-{t^*}\ar[rr]^-{\Delta}&& BG\times BG, \\
  \text{map}(\{a, b\}, BG) \ar@/_0.8pc/[rrrrru]_{=} & 
}
\eqnlabel{add-2}
$$
in which the back right square is a pullback diagram, where $t : {}_a\cap_b \to \{a\}$ denotes a deformation retraction. 

For morphisms $f : A \to M$ and $g : A\to N$ of algebras, we can regard $M$ and $N$ as $A$-modules via the morphisms. Then we write $\text{Tor}_A(M,N)_{f,g}$ for the torsion product of $M$ and $N$.  By the assumption of the theorem, 
we see that the cohomology algebras of $BG$ and $BH$ are polynomial, say $H^*(BG)\cong \K[x_i, ..., x_l]$ and $H^*(BG)\cong \K[u_i, ..., u_l]$; see \cite{Mimura-Toda}. Applying the EMSS to the pullback diagrams in (3.1) and (3.2), we have commutative diagrams  
$$
{\small 
\xymatrix@C15pt@R20pt{
H^*(\M(\partial_{\text{out}})) \ar[d]_{k^*} &\text{Tor}_{\K}(\K, H^*(BG^{S^1}))=:A   \ar[d]_{\text{Tor}_\eta(\eta, (out^*)^*)}  \ar[l]_-{\cong}^-{EM}\\
H^*(\M(W)) &\text{Tor}_{H^*(BG^\cap)}(H^*(BH^\cap), H^*(BG^W)))_{(B\iota_*)^*, res^*}=:B \ar[l]_-{\cong}^-{EM} \\
H^*(\M(\partial_{\text{in}})) \ar[u]^{h^*}& \text{Tor}_{H^*(BG^{\{a, b\}})}(H^*(BH^{\{a, b\}}), H^*(BG^{{}_a\text{---}_b})))_{(B\iota_*)^*, p^*}=:C, \ar[l]_-{\cong}^-{EM}  
\ar[u]^{\text{Tor}_{res^*}(res^*, res^*)}
}
}
\eqnlabel{add-3}
$$
where $\eta$ is the unit, $X^K$ is the mapping space $\text{map}(K, X)$ and $\cap$ denotes the arc ${}_a\cap_b=W^H$. In fact, the argument with the {\it two sided Koszul resolution} implies 
that each spectral sequence collapses at the $E_2$-term. We observe that the resolution is of the form 
\[
\xymatrix@C20pt@R17pt{
(H^*(BG)\otimes H^*(BG)\otimes \wedge(y_1, ..., y_l), D) \ar[r]^-{\m} &H^*(BG)\cong H^*(BG^I) \ar[r] & 0,
}
\]
where $D(y_i)=x_i\otimes 1 -1\otimes x_i$, $\text{bideg} \  y_i = (-1, \deg x_i)$ and $\m$ stands for the multiplication on $H^*(BG)$; see \cite{B-S}.
There exists no extension problem in the spectral sequences; see the diagram (3.6) below. Thus the naturality of each isomorphism $EM : \text{Tot}E_2^{*,*} \to   \text{Tot}E_\infty^{*,*} \cong (\text{the target cohomology})$, which is induced by the Eilenberg-Moore map \cite{S_2}, allows us to obtain the commutative diagram. Moreover, by using the retraction $r$ mentioned above, we have commutative diagrams
$$
{\small 
\xymatrix@C30pt@R17pt{
A \ar[d]_{}  \ar[r]_{=} & H^*(BG^{S^1})=:A'  \ar[d]_{\iota\otimes 1} \\
B  \ar[r]_-{\cong}^-{\text{Tor}(1, (r^*)^*)} &\text{Tor}_{H^*(BG^\cap)}(H^*(BH^\cap), H^*(BG^{S^1})))_{(B\iota_*)^*, (r^*)^*\circ res^*} =:B'  \\
C \ar[u]^{} \ar[r]_-{\cong}  &  \text{Tor}_{H^*(BG^{\times 2})}(H^*(BH^{\times 2}), H^*(BG^{{}_a\text{---}_b}))_{((B\iota_*)^*)^{\otimes 2}, res^*}=:C', 
 \ar[u]^{\text{Tor}(res^*, (r^*)^*\circ res^*)} 
}}
\eqnlabel{add-4}
$$
where the left vertical arrows are the same ones as in (3.4). 
Since the back left diagram in (3.3) is commutative, it follows that $(r^*)^*\circ res^*=(ev_0)^*\circ (t^*)^*$. Moreover, the diagram (3.3) enables us to deduce the commutativity of 
the left-hand side two squares in the diagram 
$$
{\small 
\xymatrix@C18pt@R20pt{
A'  \ar[d]_{}  & \text{Tor}_{H^*(BG^{\times 2})}(H^*(BG), H^*(BG^I)))_{\Delta^*, (\e_0\times e_1)^*} \ar[l]_-{\cong}^-{EM} \ar[d] 
            & H^*(BG)\otimes \wedge (y_1, ..., y_l) \ar[l]_-{\cong} \ar[d]^{(B\iota)^*\otimes 1}\\
B' \ar[r]^-{\cong}_-{\text{Tor}_{t^*}(t^* \!, 1)} &\text{Tor}_{H^*(BG)}(H^*(BH), H^*(BG^{S^1})))_{(B\iota_*)^*, (ev_0)^*} &  H^*(BH)\otimes \wedge (y_1, ..., y_l) \ar[l]_-{\cong}\\
C' \ar[u]^{} \ar[r]_-{\cong}  &  \text{Tor}_{H^*(BG^{\times 2})}(H^*(BH^{\times 2}), H^*(BG^I)_{((B\iota_*)^*)^{\otimes 2}, res^*} \ar[u]^-{\text{Tor}_{\Delta^*}(\Delta^*, u^*)} 
           &  \frac{H^*(BH)\otimes H^*(BH)}{((B\iota)^*x_i\otimes 1 - 1\otimes (B\iota)^*x_i )}. \ar[l]_-{\cong} \ar[u]_{\m}
}}
\eqnlabel{add-5}
$$
Explicit calculations of the EMSS's with the Koszul resolutions above give the commutative diagrams in the right-hand side in (3.6), 
where $\m$ denotes the map induced by the multiplication of the algebra $H^*(BH)$. 
We observe that $$(B\iota)^*x_1\otimes 1 - 1\otimes (B\iota)^*x_1, ..., (B\iota)^*x_l\otimes 1 - 1\otimes (B\iota)^*x_l$$ give a regular sequence since $H$ is of maximal rank. 

We are ready to prove main theorem. 

\begin{proof}[Proof of the non-triviality of $\mu_W$] In order to compute the integration along the fibre associated with 
the fibration $h := in^* : \M(W) \to \M(\partial_{\text{in}})$, we consider the Leray-Serre spectral sequence $\{{_{LS}}E_r^{*,*}, d_r \}$ for the fibration. 
As mentioned in Remark \ref{rem:fibre}, the fibration $h$ fits into the commutative diagram 
\[
\xymatrix@C15pt@R20pt{
\M(W) \ar[r] \ar[d] _h & BH^I \ar[d]^{\e_0\times \e_1} \ar[r] & K({\mathbb Z}/p, \deg u_i)^I \ar[d]^{\e_0\times \e_1}\\
\M(\partial_{\text{in}}) \ar[r]_-\beta & BH \times BH \ar[r]_-{f_i}& K({\mathbb Z}/p, \deg u_i) \times K({\mathbb Z}/p, \deg u_i)
}
\]
in which the left-hand side square is a pullback, where $\beta$ is the map in (3.2) and the map $f_i$ represents the element $u_i$.  
The argument with the EMSS yields that $H^*(\Omega BH) \cong H^*(H) \cong \wedge (z_1, ..., z_l)$ as algebras, where $\deg z_i =\deg u_i-1$. 
Comparing the Leray-Serre spectral sequences of the right two fibrations, we see that $z_i$ is transgressive to $u_i\otimes 1 - 1\otimes u_i$ for $i= 1, ..., l$ 
in the Leray-Serre spectral sequence of the middle fibration.  Therefore, we have 

\begin{lem}\label{lem:Tr} In $\{{_{LS}}E_r^{*,*}, d_r \}$, the element $z_i$ is transgressive to an element of the form $u_i\otimes 1 - 1\otimes u_i \in 
H^*(\M(\partial_{\text{in}}))$ for $i= 1, ..., l$, where $\beta^*u_i$ is identified with $u_i \in H^*(\M(\partial_{in}))$ via isomorphisms in (3.4), (3.5) and (3.6) for $i= 1 ,..., l$. 
\end{lem}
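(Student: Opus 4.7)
The plan is to prove Lemma \ref{lem:Tr} by combining naturality of transgression in the Leray--Serre spectral sequence with a comparison to the universal path--loop fibration. The commutative diagram displayed just before the statement exhibits the fibration $h$ as a pullback of $\e_0 \times \e_1 : BH^I \to BH \times BH$ along the map $\beta : \M(\partial_{\text{in}}) \to BH \times BH$. By naturality of the transgression, it therefore suffices to show that in the Serre spectral sequence of the free path fibration $\Omega BH \to BH^I \to BH \times BH$, the generator $z_i$ transgresses to $u_i \otimes 1 - 1 \otimes u_i \in H^*(BH \times BH)$; the identification $\beta^* u_i = u_i$ under the Eilenberg--Moore isomorphisms (3.4)--(3.6) then delivers the stated form in $H^*(\M(\partial_{\text{in}}))$.

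To prove the transgression formula over $BH \times BH$, I would further pull back to the universal example, exactly as indicated by the right-hand square of the diagram preceding the lemma. Choose, for each $i$, a map $f_i : BH \to K(\mathbb{Z}/p, \deg u_i)$ representing the class $u_i$, and use $f_i \times f_i$ to obtain a morphism of fibrations onto the path fibration $\Omega K \to K^I \to K \times K$ with $K = K(\mathbb{Z}/p, \deg u_i)$. On cohomology the induced map on the fibre $\Omega f_i$ sends the fundamental class $\iota_{\deg u_i - 1}$ to $z_i$, so by naturality it is enough to verify the transgression statement inside the Eilenberg--MacLane fibration.

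For the Eilenberg--MacLane fibration I would compare $\Omega K \to K^I \to K \times K$ with the two path--loop fibrations obtained by pulling back along the inclusions $\{*\} \times K \hookrightarrow K \times K$ and $K \times \{*\} \hookrightarrow K \times K$. Each pullback is a contractible based-path space fibering over $K$, and in the corresponding Serre spectral sequence the fundamental class of the fibre is known to transgress to the fundamental class of $K$. Naturality with respect to the two inclusions therefore forces the transgression $\tau(\iota_{\deg u_i - 1}) \in H^*(K \times K)$ to restrict to $\iota_{\deg u_i}$ on one factor and to $\pm \iota_{\deg u_i}$ on the other; the only element of total degree $\deg u_i$ with these boundary restrictions is, up to sign, $\iota_{\deg u_i} \otimes 1 - 1 \otimes \iota_{\deg u_i}$.

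The main obstacle is expected to be bookkeeping rather than conceptual: tracking the sign of the transgression arising from reversing the orientation of the path--loop fibration when comparing the two factor inclusions, and carefully matching the generators $u_i$ on both sides of the Eilenberg--Moore identifications in (3.4)--(3.6), in particular verifying that the class labelled $u_i$ in the torsion product $\mathrm{Tor}_{H^*(BG)}(H^*(BH), H^*(BG^{S^1}))$ pulls back to $\beta^* u_i$ along $h$. Neither of these affects the conclusion of Theorem \ref{thm:main}, whose argument only requires $\tau(z_i)$ to have the form $u_i \otimes 1 - 1 \otimes u_i$ up to a non-zero scalar.
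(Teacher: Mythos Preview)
Your proposal is correct and follows essentially the same route as the paper: both use the commutative diagram displayed just before the lemma to reduce, by naturality of the transgression, first to the free path fibration $BH^I \to BH\times BH$ and then to the Eilenberg--MacLane model $K^I \to K\times K$. The paper simply asserts the transgression formula in the universal case by ``comparing the Leray--Serre spectral sequences of the right two fibrations,'' whereas you spell out that step via the two factor inclusions into $K\times K$; this is a welcome elaboration but not a different argument.
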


By \cite[Lemma 3.4]{S_3}, we can write 
$$(B\iota)^*x_i\otimes 1 - 1\otimes (B\iota)^*x_i  = \sum_{j=1}^l \zeta_{ij}(u_j\otimes 1- 1\otimes u_j)$$ 
in $H^*(BH)\otimes H^*(BH)$  for $i = 1, ..., l$ with elements $\zeta_{ij}$ in $H^*(BH)\otimes H^*(BH)$ which satisfy the condition that 
$\m(\zeta_{ij})= \frac{\partial (B\iota)^*x_i}{\partial u_j}$, where $\m$ denotes the multiplication on $H^*(BH)$. 
Then it follows from Lemma \ref{lem:Tr} and Zeeman's comparison theorem that an element of the form 
$w_i := \sum_j^l \zeta_{ij} z_j$ is 
a permanent cycle for $i = 1, ..., l$ and that $w_1, ..., w_l$ are generators in the vector space $(Q\text{Tot}_{LS}E_\infty^{*,*})^{\text{odd}}$ 
of the indecomposable elements of $\text{Tot}{}_{LS}E_\infty^{*,*}$ with odd degree. 
In fact, the model in Zeeman's comparison theorem above enables us to deduce that each $z_i$ is not in the image of the differential in a model of the spectral sequence. The computation in (3.4), (3.5) and (3.6) implies that 
\[
\K\{w_1, ..., w_l\} \cong (Q\text{Tot}{}_{LS}E_\infty^{*,*})^{\text{odd}} \cong (QH^*(\M(W)))^{\text{odd}} \cong \K\{y_1, ..., y_l\}. 
\]
As for the first isomorphism, it follows from the argument above that there exists a surjective map
$\K\{w_1, ..., w_l\} \to (Q\text{Tot}{}_{LS}E_\infty^{*,*})^{\text{odd}}$. The second and third isomorphisms yields 
that $\dim (Q\text{Tot}{}_{LS}E_\infty^{*,*})^{\text{odd}} =l$. We have the first isomorphism. 
It turns our that $0\neq y_1\cdots y_l = w_1\cdots w_l = \text{det}(\zeta_{ij})z_1\cdots z_l$ in ${}_{LS}E_\infty^{*,\dim H}$ changing the generators $y_1, ..., y_l$ 
if it is necessary.  This implies that 
$\mu_{W}(1\otimes y_1\cdots y_l) = h^!\circ k^*(1\otimes y_1\cdots y_l) = 
h^!(1\otimes y_1\cdots y_l) = h^!( \text{det}(\zeta_{ij})z_1\cdots z_l) = \text{det}(\zeta_{ik})$. 
The last equality follows from the definition of the integration. 
\end{proof}

\begin{proof}[Proof of the non-triviality of $\mu_{W^{\text{op}}}$] We consider the integration $k^!$ along the fibre associated with the fibration 
$$(*) : G/H \stackrel{i}{\to} 
\M(W^{\text{op}})=\M(W) \stackrel{k}{\to} \M(\partial_{\text{out}})=\M((\partial^{\text{op}})_{\text{in}})
$$
which is mentioned in Remark \ref{rem:fibre}. By virtue of \cite[3.5 Proposition]{B}, we see that $H^*(BH)\cong \K[(B\iota)^*x_1, ..., (B\iota)^*x_l]\otimes M$ as an  
$H^*(BG)$-module for some  graded vector space $M$. In particular, $H^*(\M(W))$ is a free 
$H^*( \M(\partial_{\text{out}}))$-module. Thus the argument of the EMSS for the fibration $(*)$ enables us to deduce that 
$i^* : M \stackrel{\cong}{\to} H^*(G/H)$ is an isomorphism; see \cite[Proposition 4.2]{S_2}. 

It follows from the definition of $k^!$ that $k^!(\Lambda_W \cdot \alpha \otimes y^{i_1}\cdots y^{i_l})= \alpha \otimes y^{i_1}\cdots y^{i_l}$, where 
$\alpha \in  \K[(B\iota)^*x_i]$, $i_k = 0$ or $1$ and $i^*(\Lambda_W)$ denotes the fundamental class of the homogeneous space $G/H$. Therefore, we see that 
$
D\mu_{W^{\text{op}}}(1\otimes \Lambda_W \cdot \alpha) = k^! \circ h^*(1\otimes \Lambda_W \cdot \alpha) = 
k^!(\Lambda_W \cdot \alpha) = \alpha 
$ 
for an element of the form 
$$1\otimes \Lambda_W \cdot \alpha \in 
\frac{H^*(BH)\otimes H^*(BH)}{((B\iota)^*x_i\otimes 1 - 1\otimes (B\iota)^*x_i \mid 1\leq i \leq l)}\cong H^*(\M(\partial_{\text{in}})).$$
Thus $D\mu_{W^{\text{op}}}$ is non-trivial in general. 
\end{proof}

\begin{proof}[Proof of the latter half of Theorem \ref{thm:main}]
It remains to show that the composite 
$$D\mu_{W^{\text{op}}}\circ D\mu_{W}=D(\mu_{W} \circ \mu_{W^{\text{op}}})=D(\mu_{W \circ W^{\text{op}}})$$ is non-trivial. 
By the assumption of the degree of $(B\iota)^*x_i$ for $i$ and the result \cite[Proposition 3]{S}, we see that 
$\text{det}(\frac{\partial (B\iota)^*x_i}{\partial u_j})$ is the fundamental class $\Lambda_W$ of $G/H$. Therefore, the computation above and the choice of elements 
$\zeta_{ij}$ allow us to conclude that 
\begin{eqnarray*}
D\mu_{W^{\text{op}}}\circ D\mu_{W}(y_1\cdots y_l) \!\!\! &=& \!\!\! D\mu_{W^{\text{op}}}(\text{det}(\zeta_{ij})) = 
k^!(\m(\text{det}(\zeta_{ij}))) \\
&=&  \!\!\! k^!(\text{det}(\m(\zeta_{ij}))= 
k^!\Big(\text{det}\Big(\frac{\partial (B\iota)^*x_i}{\partial u_j}\Big)\Big) =1. 
\end{eqnarray*}
This completes the proof. 
\end{proof}

We conclude this section with remarks on other operations obtained by a composite with the whistle cobordism. 
The results show a fruitful structure of the labeled TQFT for classifying spaces.  

\begin{rem}\label{rem:OtherOperations}
(i) The integration $h^!$ along the fibre is a morphism of $H^*(\M(\partial_{\text{in}}))$-modules via $h^*$. Thus the computation of $\mu_W$ above yields that 
for $\gamma \in H^*(BG)$, 
\begin{eqnarray*}
\mu_W(\gamma \otimes y^{i_1}\cdots y^{i_l}) &=& \!\! h^!((B\iota)^*\gamma \otimes y^{i_1}\cdots y^{i_l}) = (1\otimes (B\iota)^*\gamma) h^!(1\otimes y^{i_1}\cdots y^{i_l}) \\
&=& \!\!
\begin{cases}
(1\otimes (B\iota)^*\gamma) \text{det}(\zeta_{ij}) & \text{if} \ i_1\cdots i_l = 1 \\
0 & \text{otherwise.}
\end{cases}
\end{eqnarray*}

(ii) Since the image of $D\mu_{W^{\text{op}}}$ is in $H^*(BG)\otimes 1$, it follows from (i) that 
$$D(\mu_{W^{\text{op}} \circ W})= D(\mu_{W^{\text{op}}}\circ \mu_{W}) = 
D\mu_{W}\circ D\mu_{W^{\text{op}}}= 0. $$
This yields that in the labeled TQFT for the classifying space, 
the operation for a cobordism with $n$ holes labeled by connected closed subgroups of maximal rank is trivial provided 
$n \geq 2$ and the characteristic of the underlying field is sufficiently large. As a consequence, under the same assumption, the Cardy condition \cite[(2.14)]{L-P} implies 
that the cobordism operation associated with the double-twist diagram \cite[Fig .11]{M-S} in the open TQFT is trivial. This result also follows form Theorem \ref{thm:openstrings} below in which the open theory is clarified in our setting.  

(iii)  Let $\Sigma$ be the pair of pants with one incoming boundary.  
The result \cite[Theorem 4.1]{K-M}, in which each $y_j$ is replaced with the notation $x_j$, 
enables us to deduce that the operator $\mu_{\Sigma \circ W}=\mu_{\Sigma} \circ \mu_W$ is non-trivial in general. 
 
(iv) We can consider the whistle cobordism operation in a homological conformal field theory (HCFT). 
Let $\bigoplus H_*(BDiff^+(\Sigma; \partial))$ be the prop parameterized by the homology of mapping class groups. By using the prop, we have a HCFT structure for classifying spaces; see \cite{C-M, G}. Let 
$C$ be the cylinder $S^1 \times [0, 1]$ and 
$$\circ : H_*(BDiff^+(C; \partial))\otimes H_*(BDiff^+(W; \partial))\to 
H_*(BDiff^+(C\circ W; \partial))$$ 
the prop structure coming from the gluing of bordisms. The Dehn twist gives rise to the 
element $\Delta$ in $H_1(BDiff^+(C\circ W; \partial))$ via the Hurewicz map. In fact, the element $\Delta$ 
induces the Batalin-Vilkovisky (B-V) operator on $H^*(LBG)$ 
by the HCFT structure; see \cite[Proposition 60]{C-M}. Observe that $\mu_W$ is regarded as an element in 
$H_0(BDiff^+(W; \partial))$.  Then under the same assumption as in (ii) 
and with the notation in the proof of Theorem \ref{thm:main}, we see that 
\[
D(\Delta \circ \mu_W)(x_1y_2\cdots y_l)= (D\mu_W \circ D\Delta) (x_1y_2\cdots y_l) = 
D\mu_W(y_1\cdots y_l) \neq 0. 
\]
Observe that the B-V operator is a derivation 
with respect to the cup product on the cohomology. Then the second equality follows from \cite[Theorem 3.1]{K-M}. 
\end{rem}

Under the same assumption as in Remark \ref{rem:OtherOperations} (ii), the cobordism operation associated to 
the pair of pants with two incoming boundaries is trivial on $H_*(LBG)$; see \cite[Theorems 7.1 and 7.3]{K-M}. 
Thus we can exactly understand the closed TQFT structure for classifying spaces. 
Thanks to the main theorem, we can also compute every cobordism operation in the open-closed TQFT labeled in $\mathcal{B}$ the set of connected closed subgroups 
of maximal rank if the open TQFT is clarified. The consideration of the open theory is the topic in Appendix A. 

\medskip
\noindent
{\it Acknowledgments.} The author is grateful to Anssi Lahtinen for inviting him to University of Copenhagen, 
and also thanks Jesper Grodal. The inspired discussions with them on string topology have enabled 
the author to refine the computations in this manuscript. 

\section{Appendix A: A labelled open TQFT for classifying spaces} 

Let $G$ be a connected compact Lie group whose cohomology with coefficients in $\K$ is a polynomial algebra over generators with even degree.   
Let $K$, $H$ and $L$ be connected closed subgroup of $G$ of maximal rank, 
whose cohomology algebras satisfy the same condition 
as that of $G$. Then we have  

\begin{thm}\label{thm:openstrings}
Let $\Upsilon$ be the basic cobordism from two labeled intervals $I^K_H$ and $I^H_L$ to one labeled interval $I^K_L$, which is pictured  
in \cite[Fig. 1]{M-S}. Then the cobordism operation $\mu_{\Upsilon}$ is trivial but not $\mu_{\Upsilon^{\text{\em op}}}$ in general.  
\end{thm}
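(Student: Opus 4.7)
The plan is to adapt the cohomological strategy of Theorem~\ref{thm:main}, computing the dual operations $D\mu_\Upsilon = h^! \circ k^*$ and $D\mu_{\Upsilon^{\text{op}}} = (h^{\text{op}})^! \circ (k^{\text{op}})^*$ via Eilenberg--Moore and Leray--Serre spectral sequences. Since $\Upsilon$ together with its three free-boundary arcs (labeled $K$, $H$, $L$) and its labeled intervals are all contractible, the pullback construction yields
\[
\M(\Upsilon) \simeq BK \times_{BG} BH \times_{BG} BL, \quad \M(\partial_{\text{out}}\Upsilon) \simeq BK \times_{BG} BL,
\]
and $\M(\partial_{\text{in}}\Upsilon) \simeq (BK \times_{BG} BH) \times (BH \times_{BG} BL)$, with all fibre products being homotopy pullbacks over $BG$. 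The maximal-rank hypothesis and Borel's theorem guarantee that every relevant EMSS collapses to the $H^*(BG)$-tensor product. The restriction $h = in^* : \M(\Upsilon) \to \M(\partial_{\text{in}}\Upsilon)$ is the homotopy pullback of the diagonal $\Delta : BH \to BH \times BH$ along the projection to the two middle $BH$-factors, so it is a fibration with fibre $\Omega BH \simeq H$; the restriction $k = out^*$ forgets the middle $BH$-factor and is a fibration with fibre $G/H$.

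To prove $\mu_\Upsilon = 0$, I examine the Leray--Serre spectral sequence of $h$ along the lines of the proof of Theorem~\ref{thm:main}. Writing $H^*(H) = \wedge(z_1, \dots, z_l)$, each $z_i$ transgresses to $u_i - u_i' \in H^*(\M(\partial_{\text{in}}\Upsilon))$, where $u_i$ and $u_i'$ denote the generators of $H^*(BH)$ coming from the first and second $\K$-tensor factor, respectively. The crucial point is that $\{u_i - u_i'\}_{i=1}^l$ is a regular sequence in $H^*(\M(\partial_{\text{in}}\Upsilon))$: by Borel's theorem $H^*(BK) \otimes_{H^*(BG)} H^*(BH)$ is free as an $H^*(BH)$-module (with basis $H^*(G/K)$), and analogously for the second tensor factor, so $H^*(\M(\partial_{\text{in}}\Upsilon))$ is a flat extension of the polynomial ring $H^*(BH) \otimes H^*(BH) = \K[u_i, u_i']$ in which $\{u_i - u_i'\}$ is tautologically regular. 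The Koszul model of the Serre spectral sequence therefore forces $E_\infty^{*, q} = 0$ for $q > 0$; in particular $E_\infty^{*, \dim H} = 0$. Since $h^!$ factors through projection onto the top-fibre-degree column, we conclude $h^! \equiv 0$ and hence $D\mu_\Upsilon = 0$.

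For the non-triviality of $\mu_{\Upsilon^{\text{op}}}$, the relevant dual is $D\mu_{\Upsilon^{\text{op}}} = (h^{\text{op}})^! \circ (k^{\text{op}})^*$, where $h^{\text{op}} : \M(\Upsilon^{\text{op}}) \to \M(\partial_{\text{in}}\Upsilon^{\text{op}}) = BK \times_{BG} BL$ is the forgetful fibration with fibre $G/H$ and $k^{\text{op}}$ coincides with the diagonal-like map $h$ considered above. By the analogue of Proposition~\ref{prop:fibrations} (directly via Borel), $H^*(\M(\Upsilon^{\text{op}}))$ is a free $H^*(BK \times_{BG} BL)$-module with basis $H^*(G/H)$, and $(h^{\text{op}})^!$ sends the fundamental class $\Lambda \in H^{\dim(G/H)}(G/H) \subset H^*(\M(\Upsilon^{\text{op}}))$ to $1$, exactly as in the proof of the non-triviality of $\mu_{W^{\text{op}}}$. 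The pullback $(k^{\text{op}})^*$ is induced by the multiplication $H^*(BH) \otimes H^*(BH) \to H^*(BH)$ on the two middle factors, hence surjective (its image already contains $1 \otimes H^*(BH)$), so $\Lambda$ admits a preimage $\alpha$ and $D\mu_{\Upsilon^{\text{op}}}(\alpha) = 1 \neq 0$.

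The main technical hurdle is to justify the regularity of $\{u_i - u_i'\}$ cleanly from Borel's decomposition and to convert it into the vanishing of the top-fibre column of $E_\infty$. The contrast with the whistle case of Theorem~\ref{thm:main} is illuminating: there the analogous transgressions $u_i \otimes 1 - 1 \otimes u_i$ live in $H^*(BH) \otimes_{H^*(BG)} H^*(BH)$ and fail to be regular precisely because of the relations $\sum_j \zeta_{ij}(u_j \otimes 1 - 1 \otimes u_j) = 0$ imposed by $H^*(BG)$, and that very failure produces the permanent cycles $w_i = \sum_j \zeta_{ij} z_j$ that render $\mu_W$ non-trivial. In the $\Upsilon$-setting the two $BH$-factors inhabit separate $\K$-tensor factors with no such coupling, and this is the geometric source of the triviality of $\mu_\Upsilon$.
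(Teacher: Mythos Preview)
Your proposal is correct. For the non-triviality of $\mu_{\Upsilon^{\text{op}}}$ your argument coincides with the paper's: both use that $H^*(\M(\Upsilon))$ is free over $H^*(\M(I^K_L))$ with basis $H^*(G/H)$, pick a lift $\Lambda_\Upsilon$ of the fundamental class, observe that it lies in the image of $(k^{\text{op}})^*=((in)^*)^*$ (the multiplication map $1\otimes\m\otimes 1$), and conclude $D\mu_{\Upsilon^{\text{op}}}(\alpha)=1$.

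For the triviality of $\mu_\Upsilon$ you take a genuinely different route. The paper does not touch the Leray--Serre spectral sequence of $h$ at all: from the explicit description of $((out)^*)^*=\varphi$ and $((in)^*)^*=1\otimes\m\otimes 1$ in diagram~(4.1) it simply reads off that $\operatorname{im}\varphi\subset\operatorname{im}(1\otimes\m\otimes 1)$ (since $a\otimes 1\otimes c=(1\otimes\m\otimes 1)\bigl((a\otimes 1)\otimes(1\otimes c)\bigr)$), and then uses that $h^!\circ h^*=0$ because elements in $\operatorname{im}h^*$ sit in base filtration, hence have trivial projection to the top fibre row. Your argument instead proves the stronger statement $h^!\equiv 0$ by showing $E_\infty^{*,q}=0$ for $q>0$: you verify that the transgression targets $u_i-u_i'$ form a regular sequence in $H^*(\M(\partial_{\text{in}}\Upsilon))$, using Borel's freeness of $H^*(BK)$ and $H^*(BL)$ over $H^*(BG)$ to see the base is free over $\K[u_i,u_i']$. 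This is more work but yields more; it also gives a pleasing structural explanation---regularity here versus the failure of regularity in the whistle case---for why $\mu_\Upsilon$ vanishes while $\mu_W$ does not. The paper's argument, by contrast, is a two-line observation that avoids any spectral-sequence analysis.
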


Appendix A is devoted to proving the theorem. We consider fibrations below which define the cobordism operations $\mu_\Upsilon$ and $\mu_{\Upsilon^{\text{op}}}$. 
Moreover, we investigate the cohomology algebras of total and base spaces. The results are described with the diagram  
\[
\xymatrix@C12pt@R20pt{
\M(I^K_L)  & H^*(\M(I^K_L)) \ar[d]_-{((out)^*)^*} &
     \frac{H^*(BK)\otimes H^*(BL)}{((Bk)^*x_i\otimes 1 - 1\otimes (B\ell)^*x_i )} \ar[d]_\varphi \ar[l]_-\cong \\
    \M(\Upsilon) \ar[u]_-{(out)^*}  \ar[d]^-{(in)^*}  & H^*(\M(\Upsilon)) &
    \frac{H^*(BK)\otimes  H^*(BH) \otimes H^*(BL)}{((Bk)^*x_i\otimes 1 - 1\otimes (B\iota)^*x_i,  
      (B\iota )^*x_i\otimes 1 - 1\otimes (B\ell )^*x_i)}  \ar[l]_-\cong\\
\! \M(I^K_H \coprod I^H_L) & \! \! \! \! \!  H^*( \M(I^K_H \coprod I^H_L))  \ar[u]^-{((in)^*)^*}& 
\frac{H^*(BK)\otimes  H^*(BH)}{((Bk)^*x_i\otimes 1 - 1\otimes (B\iota)^*x_i)}
   \!  \otimes \!
      \frac{H^*(BH)\otimes  H^*(BL)}{((B\iota)^*x_i\otimes 1 - 1\otimes (B\ell)^*x_i)}  \ar[u]^-{1\otimes \m \otimes 1}, 
      \ar[l]_-\cong
 }
 \eqnlabel{add-6}
\]
where $\varphi$ is the map induced by the natural map 
\[
H^*(BK)\otimes H^*(BL) \to H^*(BK)\otimes 1 \otimes H^*(BL) \to H^*(BK)\otimes H^*(BH)\otimes H^*(BL), 
\] 
and $k : K \to G$, $\iota : H \to G$ and $\ell : L \to G$ denote the inclusions. 
Observe that the fibres of the fibrations $(out)^*$ and  $(in)^*$ are $G/H$ and $\Omega BH \simeq H$, 
respectively; see the proof of \cite[Lemma 2.3.11]{G}. The cohomology algebras are computed with 
the Eilenberg-Moore spectral sequence as made in Section 3 associated with pullback diagrams defining the spaces  $\M(I^K_L)$,  $\M(\Upsilon)$ and $\M(I^K_H \coprod I^H_L)$. 

The computation of $H^*(\M(\Upsilon))$ is here given.  
We choose a homotopy equivalence 
\[
\objectmargin={0pt}
\xymatrix@C15pt@R15pt{
r : \partial_{in} = a_1 \ar@{-}[r] & \bullet \ar@{-}@/^0.7pc/[r] & a_2 \ar@{-}[r] & a_3  \ \ \ \ar[r]^\simeq & \ \ \ 
 a_1 \ar@{-}[r] & a_2 \ar@{-}[r] & a_3 = I \cup_{a_2} I
}
\]
with a homotopy inverse $i$ which satisfies the condition that $i(a_s) = a_s$ for $s = 1,2$ and $3$. 
We regard $\Upsilon$ as a labeled cobordism $(\Upsilon, \{\Upsilon^{H_i}\}_{i=1,2,3}\})$, where $H_1= K$, $H_2= H$, $H_3= L$. 
Observe that the in-boundaries of the free boundaries $\Upsilon^{H_1}$, $\Upsilon^{H_2}$ and $\Upsilon^{H_3}$ are 
the sets $\{a_1\}$, $\{\bullet, a_2\}$ and $\{a_3\}$, respectively. By the definition of the space $\M(\Upsilon)$, we have commutative diagrams  
\[
\xymatrix@C15pt@R20pt{
\M(\Upsilon) \ar[r] \ar[d] & \text{map}(\Upsilon, BG) \ar[r]^{i^*}_{\simeq} \ar[d] & BG^{I \cup_{a_2} I} \ar[d]^{(ev_0, ev_{\frac{1}{2}}, ev_{1})} \ar[r] & BG^I \times BG^I \ar[d]^{(ev_0\times ev_1)^{\times 2}}\\
\prod_{i=1}^3 \text{map}(\Upsilon^{H_i}, BH_i)  \ar[r]_\psi &  \prod_{i=1}^3 \text{map}(\Upsilon^{H_i}, BG) 
\ar[r]_-{\gamma}^-{\simeq} & BG^{\times 3} \ar[r]_{1\times \Delta \times 1} & BG^{\times 4}
}
\]
in which the right-hand side and left-hand side squares are pullback diagrams, 
where the map $\gamma$ is defined by the embeddings which are homotopy inverses of 
deformation retractions $\Upsilon^{H_i} \to \{a_i\}$ and 
$\psi$ is the map induced by the inclusions $k$, $\iota$ and $\ell$ mentioned above. Thus applying the EMSS to the big square 
which is a homotopy pullback, we can compute the cohomology algebra $H^*(\M(\Upsilon))$ in (4.1) with the two sided  Koszul resolution mentioned in Section 3. 

\begin{proof}[Proof of Theorem \ref{thm:openstrings}]
By definition, we see that $D\mu_{\Upsilon} = ((in)^*)^!\circ ((out)^*)^*$. The image of $((out)^*)^*$ is in the image of 
$((in)^*)^*$. This follows from the commutativity of the diagram (4.1). Then the definition of the integration 
along the fibre yields that $D\mu_{\Upsilon}$ is trivial. 

We investigate the Leray-Serre spectral sequence for the fibration $G/H \stackrel{j}{\longrightarrow}  \M(\Upsilon) \stackrel{(out)^*}{\longrightarrow} \M(I^K_L)$.  
Since the subgroup $H$ is of maximal rank, it follows that 
the $E_2$ term is generated by elements with even degree and hence  the map $j^*$ induced by the inclusion 
$j$ is an epimorphism. Therefore, there exists an element $\Lambda_{\Upsilon}$ 
of the form $\sum_i q_i \cdot (1\otimes b_i \otimes 1)$ in 
$H^*(\M(\Upsilon))$ with $q_i \in H^*(BK)\otimes  H^*(BL)$ and  $b_i \in H^*(BH)$ 
such that $j^*(\Lambda_{\Upsilon})$ is the fundamental class of $G/H$. We can assume that $\deg q_i = 0$ for any $i$ 
because $q_i$ is in the image of $\varphi$. Thus we see that 
$D\mu_{\Upsilon^{\text{op}}} = ((out)^*)^!\circ ((in)^*)^*(\sum_i q_i \cdot (1\otimes b_i \otimes 1 \otimes 1))= 
((out)^*)^!(\sum_i q_i \cdot (1\otimes b_i \otimes 1))= 1$. This completes the proof. 
\end{proof}

By virtue of Theorems \ref{thm:main}, \ref{thm:openstrings}, the results \cite[Theorems 4.1 and 7.1]{K-M} 
for closed TQFT for classifying spaces and 
the result \cite[Proposition 3.9]{L-P}, we have 

\begin{assertion}\label{assertion:TQFT} 
Let $\B$ be the set of connected closed subgroup of $G$ of maximal rank. Then one can make a calculation of each of the dual operations for the labeled TQFT $\mu :  (\mathsf{oc\text{-}Cobor}(\B), \coprod) \to 
(\mathbb{Q}\text{-}\mathsf{Vect}, \otimes)$ introduced by Guldberg 
up to multiplication by non-zero scalar with the cohomology algebras and their generators described 
in (3.6) and (4.1), and moreover, with representatives $\Lambda_W$ and $\Lambda_{\Upsilon}$ of the fundamental classes of the homogeneous spaces $G/H$ in $H^*(\M(W))$ and $H^*(\M(\Upsilon))$; see the proof of Theorems \ref{thm:main} and 
\ref{thm:openstrings} for $\Lambda_W$ and $\Lambda_{\Upsilon}$.
\end{assertion}

In particular, we see that $\mu_\Sigma \equiv 0$ for a cobordism $\Sigma$ which has two holes, 
or contains at least either one of $\Upsilon$ and the pair of pants with two in-boundaries as a component constructing the cobordism with gluing.

\section{Appendix B : An algebraic model for the whistle cobordism operation}

We give an algebraic model for $\mu_W$ in cochain level over the rational. In this section, the cochain algebra $C^*(X)$ for a space $X$ is regarded as the DG algebra of PL differential forms on $X$ thought the same notation as that of the singular cochain algebra is used; see \cite{B-G} and \cite[Section 10]{FHT1} for PL differential forms. In what follows, we assume that $H$ is an arbitrary connected closed subgroup of a connected compact Lie group $G$. 

We recall the commutative diagram in (3.2) and first consider the fibration $res :  \text{map}({}_a\cap_b, BH) \to \text{map}(\{ a, b\}, BH)$ in the left square. 
The results \cite[Theorems 14.12 and 15.3]{FHT1} allow us to obtain a minimal relative Sullivan model for $res$ 
of the form 
$$\zeta : C^*(\text{map}(\{ a, b\}, BH)) \otimes H^*(\Omega BH) \stackrel{\simeq}{\longrightarrow} C^*(\text{map}({}_a\cap_b, BH)).$$
Observe that the source is the tensor product as a vector space, but not as a DGA. 
By using the model $\zeta$, we have a model $res^! : C^*(\text{map}(\{ a, b\}, BH)) \otimes H^*(\Omega BH) \to 
C^*(\text{map}(\{ a, b\}, BH))$  
for the integration along the fibre of the fibration $res$ mentioned above; see \cite[Theorem 5]{F-T}. The left-hand side diagram in (3.2) is the pullback described in Remark \ref{rem:fibre}. Then the proof of \cite[Theorem 6]{F-T} yields that  $res^!\otimes 1$ in (5.1) below is a model for the integration $h^!$. 
Moreover, a quasi-isomorphism 
$$u : C^*(\text{map}(\{ a, b\}, BH))\otimes^{{\mathbb L}}_{C^*(\text{map}({}_a\cap_b, BG))}C^*(\text{map}({}_a\text{--}_b, BG) ) \stackrel{\simeq}{\to} C^*(\M(\partial_{\text{in}}))$$
is induced by the front pullback in (3.2). 
Thus we have commutative diagrams with solid arrows 
$$
{\small 
\xymatrix@C18pt@R15pt{
C^*(\M(\partial_{\text{out}})) \ar[dd]_{k^*} & \Q \otimes_\Q C^*(\text{map}(S^1, BG)) \ar[d]^{\eta\otimes res^*}  
       \ar[l]_{\simeq} \\
                  & C^*(\text{map}({}_a\cap_b, BH)) \otimes^{\mathbb{L}}_{C^*(\text{map}({}_a\cap_b, BG))} 
                   C^*(\text{map}(W, BG) ) \ar[dl]_-{\simeq}^-{\xi_1} \ar@{..>}@/^2pc/[dd]_-{\Psi} \\
C^*(\M(W))  \ar[dd]_{h^!}& \\
                  &  C^*(\text{map}({}_a\cap_b, BH)) \otimes^{\mathbb{L}}_{C^*(\text{map}({}_a\cap_b, BG))} 
                   C^*(\text{map}({}_a\text{--}_b, BG) ) \ar[ul]^-{\simeq}_-{\xi_2} \ar[uu]^-{1\otimes_{res^*}res^*} \ar[d]^{1\otimes u}_{\simeq} \\ 
C^*(\M(\partial_{\text{in}})) & C^*(\text{map}({}_a\cap_b, BH)) \otimes^{\mathbb{L}}_{C^*(\text{map}(\{ a, b\}, BH))} 
                  C^*(\M(\partial_{\text{in}})) \ar@/^2pc/[uul]^-{\simeq}_(0.8){\xi_3} \ar[l]^-{res^!\otimes 1}
}
}
\eqnlabel{add-3}
$$
in which $\xi_1$ and $\xi_3$ are quasi-isomorphisms induced by the back pullback diagram and the left-hand side pulback diagram in (3.2), respectively, and 
$\xi_2$ is a quasi-isomorphism induced by the big pullback which the left-hand side and the front pullback diagrams give. 
By the lifting lemma \cite[Proposition 12.9]{FHT1} enables us to obtain a right inverse $\Psi$ of $1\otimes_{res^*}res^*$ in the derived category of 
$C^*(\text{map}({}_a\cap_b, BG))$-modules. 
The last step in constructing the model for $\mu_W=h^!\circ k^*$ is not explicit. In fact, as expected from 
the proof of Theorem \ref{thm:main}, 
it seems that the construction of the lift is complicated in general. 
Under appropriate assumptions on $G$ and the subgroup $H$, it is anticipated that 
Sullivan models serve the explicit calculation. However, we do not pursue the topic in this article.

\end{document}